\documentclass[12pt,twoside, a4paper]{amsart}

\usepackage{mathptmx}
\usepackage{amssymb}
\usepackage{eucal}
\usepackage{amsxtra}
\usepackage{verbatim}
\usepackage{enumerate}
\usepackage[english]{babel}
\usepackage{bm}

\usepackage{setspace}
\usepackage[body={17cm,21cm}]{geometry}
\usepackage[T1]{fontenc}
\usepackage{graphicx}
\usepackage{mathrsfs}
\usepackage{amscd,latexsym,amsthm,amsfonts,amssymb,amsmath,amsxtra}
\usepackage[colorlinks, urlcolor=blue,  citecolor=blue]{hyperref}
\usepackage{enumerate}
\usepackage[all]{xy}%
\setcounter{MaxMatrixCols}{30} 

\providecommand{\U}[1]{\protect\rule{.1in}{.1in}}
\RequirePackage{amsmath}
\RequirePackage{amssymb}
%


  \sloppy   \emergencystretch 10pt
\hyphenpenalty=5000
\clubpenalty=5000
\widowpenalty=5000
\displaywidowpenalty=1500

\usepackage{comment}


\theoremstyle{plain}

\newtheorem{theorem}{Theorem}

\newtheorem{lemma}[theorem]{Lemma}

\newtheorem{theorem?}{Theorem(?)} [section]
\newtheorem{proposition?}[theorem]{Proposition(?)}
\newtheorem{lemma?}[theorem]{Lemma(?)}
\newtheorem{corollary?}[theorem]{Corollary(?)}

\newtheorem*{theorem*}{Theorem}
\newtheorem*{proposition*}{Proposition}
\newtheorem*{lemma*}{Lemma}
\newtheorem*{corollary*}{Corollary}
\newtheorem*{question*}{Question}
\newtheorem*{conjecture*}{Conjecture}
\newtheorem*{claim*}{Claim}

\newtheorem*{introtheorem*}{Theorem}
\newtheorem*{introproposition*}{Proposition}
\newtheorem*{introlemma*}{Lemma}
\newtheorem*{introcorollary*}{Corollary}

\theoremstyle{definition}

\newtheorem*{definition*}{Definition}
\newtheorem*{example*}{Example}

\newtheorem{question}[theorem]{Question}

\theoremstyle{remark}
\newtheorem{remark}[theorem]{Remark}

\newtheorem*{remark*}{Remark}

\numberwithin{equation}{section}
\numberwithin{theorem}{section}


\DeclareSymbolFont{rsfs}{U}{rsfs}{m}{n}
\DeclareSymbolFontAlphabet{\mathcal}{rsfs}

%
%
%
%

\newcommand{\ZZ}{{\mathbb{Z}}}
\newcommand{\QQ}{{\mathbb{Q}}}

\newcommand{\Pic}{{\rm Pic}}

\newcommand{\Br}{{\rm Br}}

\newcommand{\Hom}{{\rm Hom}}

\newcommand{\Spec}{{\rm Spec}}
\newcommand{\kbar}{{\overline{k}}}

\newcommand{\CH}{{\rm CH}}

\newcommand{\Res}{{\rm Res}}

\newcommand{\inv}{{\rm inv}}

\def\Z{{\ZZ}}
\def\Q{{\QQ}}




\setcounter{section}{0}
\begin{document}

\title[]
{Weak approximation of symmetric products and norm varieties}

\author{Sheng Chen}

\address{Sheng Chen \newline School of Mathematical Sciences, \newline  University of Science and Technology of China; \newline 96 Jinzhai Road, 230026 Hefei, China}

\email{chenshen1991@ustc.edu.cn}

\author{ZiYang Zhang}

\address{ZiYang Zhang \newline School of Mathematical Sciences, \newline  University of Science and Technology of China; \newline 96 Jinzhai Road, 230026 Hefei, China}

\email{triangcixjang@mail.ustc.edu.cn}

\date{\today}

\keywords{Brauer--Manin obstruction, weak approximation, symmetric product, norm variety}
\subjclass[2010]{Primary: 11G35, 14G05}

\begin{abstract} Let $k$ be a number field. For a variety $X$ over $k$ that satisfies weak approximation with Brauer--Manin obstruction, we study the same property for smooth projective models of its symmetric products. Based on the same method, we also explore the property of weak approximation with Brauer--Manin obstruction for norm varieties.
\end{abstract}

\maketitle
\section{Introduction}
\subsection{Background} Let $k$ be a number field and $X$ a smooth projective variety over $k$. The set $X(k)$ of $k$-rational points of $X$ can be viewed as a subset of the product $X{({\mathbf A}_k)}=\prod_{v \in \Omega_k} X(k_v)$ via the diagonal embedding, where $\Omega_k$ denotes the set of places of $k$, $k_v$ the completion of $k$ at $v$ and ${\mathbf A}_k$ the ring of ad\`eles of $k$. We endow $\prod_{v \in \Omega_k} X(k_v)$ with the product of the $v$-adic topologies. We say that $X$
satisfies weak approximation if $X(k)$ is dense in $\prod_{v \in \Omega_k} X(k_v)$. The property of weak approximation in general fails for many varieties. Following Manin \cite{Manin}, one may attempt to explain such failures by considering the Brauer--Manin set $X{({\mathbf A}_k)}^{\Br}$, defined as the set of elements of $X({\mathbf A}_k)$ that are orthogonal to the Brauer group $\Br(X)$ of $X$, with respect to the Brauer--Manin pairing. The Brauer--Manin set $X{({\mathbf A}_k)}^{\Br}$ is a closed
subset of $X{({\mathbf A}_k)}$ and the global reciprocity law implies that $X(k)\subset X{({\mathbf A}_k)}^{\Br}$ (see \cite[\S 5.2]{Sko}). We say that $X$
satisfies weak approximation with Brauer--Manin obstruction if $X(k)$ is dense in $X{({\mathbf A}_k)}^{\Br}$.
A conjecture of Colliot-Th\'el\`ene \cite{CT} predicts that $X(k)$ is dense in $X{({\mathbf A}_k)}^{\Br}$ when $X$ is a smooth projective rationally connected variety.

Given a smooth projective variety $X$. Let $Sym^n(X)$ be the $n$-th symmetric product of $X$, i.e., the (singular in general) quotient $X^n/S_n$, where the symmetric group $S_n$ acts by permuting the coordinates. If $X$ is rationally connected, any smooth projective model of $Sym^n(X)$ is also rationally connected. This prompts us to ask the following question.
\begin{question}\label{q1}
Let $X$ be a smooth projective rationally connected variety over $k$. If $X$ satisfies weak approximation with Brauer--Manin obstruction, does any smooth projective model of $Sym^n(X)$ also satisfy this property?
\end{question}

Let $l$ be a prime. Let $a_1, \dots, a_n \in k^{\times}$ with symbol $\alpha :=\{a_1, \dots, a_n\} \in K^{M}_n(k)$. Associated to the symbol $\alpha$ there is a standard norm variety $X_{\alpha,l}$ (see \cite[\S 2]{Sus}), which is constructed by using symmetric product. One may apply this construction to any smooth quasi-projective variety. Briefly, there is a norm variety $N(X,a,n)$ associated to a smooth quasi-projective variety $X$ (see \cite[\S 2]{A}), where $a \in k^{\times}$. Asok \cite[Proposition 2.6]{A} proved that the norm variety $N(X,a,n)$ is rationally connected if $X$ is rationally connected. Then we can also ask the following question.
\begin{question}\label{q2}
Let $X$ be a smooth projective rationally connected variety over $k$. If $X$ satisfies weak approximation with Brauer--Manin obstruction, does any smooth projective model of $N(X,a,n)$ also satisfy this property?
\end{question}
\subsection{Main results} The purpose of this note is to give a positive answer to Question \ref{q1} and \ref{q2} under some assumptions.

\begin{theorem}\label{t1}
Let $X$ be a smooth projective rationally connected variety over $k$. Assume that for any finite field extension $L/k$, the set $X_{L}(L)$
is dense in $X_{L}({\mathbf A}_{L})^{\Br}$. Then any smooth projective model of $Sym^n(X)$ satisfies weak approximation with Brauer--Manin obstruction.
\end{theorem}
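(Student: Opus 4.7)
The plan is to cover $Y$ birationally (on its smooth étale locus) by a Weil restriction $Z = \prod_i \Res_{L_i/k}(X_{L_i})$ for a suitable finite étale $k$-algebra $A = \prod_i L_i$ of dimension $n$, and then apply the hypothesis to each factor $X_{L_i}/L_i$.

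\textbf{Step 1 (Reduction to the configuration locus).} Let $U \subset Sym^n(X)$ denote the smooth open locus parametrizing reduced zero-cycles of degree $n$; the map $X^n \setminus \Delta \to U$ is an étale $S_n$-Galois cover, so $U$ is smooth. Since $Y$ is a smooth projective model of $Sym^n(X)$, there is a dense open $Y^\circ \subset Y$ isomorphic to a dense open of $U$. Given $(P_v) \in Y(\mathbf{A}_k)^{\Br}$ with prescribed $v$-adic open neighborhoods $W_v \ni P_v$ at $v$ in a finite set $S$, I would first perturb each $P_v$ (for $v \in S$) into $Y^\circ(k_v) \cap W_v$; this preserves Brauer orthogonality by continuity of the Brauer pairing, and is possible since $Y$ is smooth and $Y^\circ$ is Zariski-dense.

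\textbf{Step 2 (Globalizing the étale algebra).} A point of $U(k_v)$ canonically encodes an étale $k_v$-algebra $A_v = \prod_j F_{v,j}$ of dimension $n$ together with an $A_v$-point $(Q_{v,j})_j$ of $X$ whose geometric components are pairwise distinct. I would then apply Krasner's lemma together with weak approximation on the affine space of degree-$n$ polynomials to produce a finite étale $k$-algebra $A = \prod_{i=1}^r L_i$ of dimension $n$ with $k_v$-algebra isomorphisms $A \otimes_k k_v \simeq A_v$ for every $v \in S$. Via these isomorphisms the $P_v$ translate to $X(A \otimes_k k_v)$-points, hence to adelic data for each $X_{L_i}$ at the places of $L_i$ above $S$.

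\textbf{Step 3 (Descent of the obstruction and conclusion).} Consider the natural $k$-morphism
\[
\phi \colon Z = \prod_{i=1}^r \Res_{L_i/k}(X_{L_i}) \dashrightarrow Sym^n(X), \qquad (R_i)_i \longmapsto \textstyle\sum_i (R_i),
\]
which is generically étale onto the type-$A$ stratum (and birational onto its image when the $L_i$ are pairwise non-isomorphic). Combined with the canonical identifications $Z(\mathbf{A}_k) = \prod_i X_{L_i}(\mathbf{A}_{L_i})$ and the standard compatibility of the Brauer--Manin pairing under Weil restriction, and using that rational connectedness of $X$ ensures $X_{L_i}$ has local points at all but finitely many places, I would extend the partial adelic data to a genuine adelic point of $Z$. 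Assuming the extension can be made orthogonal to $\Br(Z)$, the hypothesis applied to each $X_{L_i}/L_i$ yields $R_i \in X(L_i)$ approximating the adelic data at the relevant finite set of places of $L_i$, and the resulting cycle $\sum_i (R_i) \in Sym^n(X)(k) = Y(k)$ approximates $(P_v)$ at $S$.

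\textbf{Main obstacle.} The crux is the descent of the Brauer--Manin obstruction via $\phi$: a priori, $\phi^* \Br(Y)$ is only a subgroup of $\Br(Z)$, so orthogonality of $(P_v)$ to $\Br(Y)$ need not lift to orthogonality to $\Br(Z)$ of any given lift. Overcoming this requires modifying the adelic lift at places outside $S$ to kill the extra classes in $\Br(Z) / \phi^* \Br(Y)$; a natural tool is Harari's formal lemma together with the rational connectedness of $X_{L_i}$, which furnishes many $L_{i,w}$-points at unramified places $w$. Making this construction rigorous and verifying the compatibility of Brauer groups of $Y$, of $Sym^n(X)$ on its smooth locus, and of Weil restrictions is the technical heart of the theorem.
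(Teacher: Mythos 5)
Your Steps 1 and 2 are sound, and your identification of the relevant geometry coincides with the paper's: rational points of $Sym^n(X)$ in the configuration locus are governed by Weil restrictions $\Res_{A/k}(X_A)$ for degree-$n$ \'etale algebras $A$, these being the twists of the $S_n$-torsor $X^n\setminus\Delta \to (X^n\setminus\Delta)/S_n$, and the hypothesis on $X$ together with the Cao--Liang compatibility of the Brauer--Manin obstruction with Weil restriction shows that each twist satisfies weak approximation with Brauer--Manin obstruction. However, the step you flag as the ``main obstacle'' is a genuine gap, and the tools you propose do not close it. Lifting an adelic point of $Y$ orthogonal to $\Br(Y)$ to an adelic point of some twist $Z$ orthogonal to all of $\Br(Z)$ is exactly the problem of descent along a torsor under a finite non-abelian group: one only knows $\bigcup_\sigma f^\sigma\bigl(Z^\sigma({\mathbf A}_k)^{\Br}\bigr) \subseteq Y({\mathbf A}_k)^{\Br}$, and the reverse inclusion (up to closure) amounts to the \'etale-Brauer--Manin obstruction being no finer than the Brauer--Manin obstruction --- false in general, and established only for supersolvable groups by Harpaz--Wittenberg, whereas $S_n$ is not supersolvable for $n\geq 4$. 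Harari's formal lemma cannot be used to kill the classes in $\Br(Z)/\phi^*\Br(Y)$: these classes lie in the Brauer group of the smooth projective $Z$, hence are unramified, while the formal lemma only allows one to adjust evaluations of classes that are nontrivial at infinitely many places.

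The paper circumvents precisely this difficulty by not descending along the finite cover directly. It introduces the auxiliary variety $(X^n\times\Bbb{A}^n_k)/S_n$ fibred over $\Bbb{A}^n_k/S_n \cong \Bbb{A}^n_k$; the fibres over rational points of a Hilbert subset are the Weil restrictions $\Res_{k(\overline{x}_1)/k}(X_{k(\overline{x}_1)})$ (your $Z$, arranged via Hilbert irreducibility so that $A$ is a field), and the lifting problem is then delegated to the fibration theorem of Harpaz--Wittenberg over $\Bbb{P}^n$, whose hypotheses require checking that the fibres over codimension-one points are split (Lemma \ref{l3}). One then returns to $X^n/S_n$ via the other projection $g$, whose generic fibre is rational, so the unramified Brauer groups agree and $g$ has rational sections. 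To salvage your direct approach you would essentially have to reprove that fibration theorem in this special case, which is the hard part of the argument and is not supplied by your sketch.
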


In Section 2, we will prove this theorem by using fibration method.

\begin{theorem}\label{t2}
Let $X$ be a smooth projective rationally connected variety over $k$ with $X(k)\ne \emptyset$. Assume that for any finite field extension $L/k$, the set $X_{L}(L)$ is dense in $X_{L}({\mathbf A}_{L})^{\Br}$. Then, assuming the conjecture of Harpaz--Wittenberg (\cite[Conj.9.1]{HW16}), any smooth projective model of $N(X,a,n)$ satisfies weak approximation with Brauer--Manin obstruction.
\end{theorem}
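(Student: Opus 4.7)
The plan is to exhibit a smooth projective model $Y$ of $N(X,a,n)$ as the total space of a fibration $\pi : Y \to \mathbb{P}^1_k$ coming naturally from Asok's construction, to verify the fiber hypotheses of the Harpaz--Wittenberg conjecture \cite[Conj.~9.1]{HW16} using Theorem \ref{t1}, and then to conclude by that conjecture. This parallels the fibration-method proof of Theorem \ref{t1} using symmetric-product fibrations; here the extra norm condition forces us beyond tori and requires the full strength of \cite[Conj.~9.1]{HW16}, which in turn is the reason the Harpaz--Wittenberg conjecture appears as a hypothesis.

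Unwinding the definition of $N(X,a,n)$ in \cite[\S 2]{A}, which is modeled on Rost's standard norm varieties and is built inductively from symmetric products together with a norm-type condition, one extracts a natural dominant morphism from $N(X,a,n)$ (or from a suitable compactification) to an affine line. The generic fiber is geometrically integral and birational to a symmetric product $Sym^m(X)_K$ for some $m<n$ and $K$ the function field of $\mathbb{P}^1_k$, while the fibers over closed points are birational to symmetric products of base changes of $X$ to the residue fields of those closed points. After compactifying and resolving singularities, one obtains a smooth projective model $Y$ of $N(X,a,n)$ together with a morphism $\pi : Y \to \mathbb{P}^1_k$ whose smooth fibers are smooth projective rationally connected varieties, each birational to a symmetric product of $X_L$ for some finite extension $L/k$.

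The smooth-fiber hypothesis of \cite[Conj.~9.1]{HW16} is then furnished by Theorem \ref{t1}: since the density hypothesis on $X$ propagates to every finite extension $L/k$, Theorem \ref{t1} over $L$ gives weak approximation with Brauer--Manin obstruction for smooth projective models of $Sym^m(X_L)$, hence for the smooth fibers of $\pi$ over closed points of $\mathbb{P}^1_k$. The splitness hypothesis of \cite[Conj.~9.1]{HW16} at the non-smooth fibers of $\pi$ is where the assumption $X(k) \ne \emptyset$ enters: a $k$-rational point on $X$ supplies enough rational points on the auxiliary symmetric-product pieces appearing in Asok's construction to guarantee that each non-split fiber admits a split component defined over the appropriate residue field. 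With these hypotheses verified, the Harpaz--Wittenberg conjecture yields that $Y(k)$ is dense in $Y({\mathbf A}_k)^{\Br}$, which is the desired conclusion.

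The main obstacle is the first step: carefully identifying the fibration structure on $N(X,a,n)$ extracted from \cite[\S 2]{A}, in particular pinning down which symmetric products (and over which residue fields) appear as the generic and special fibers, and verifying the splitness condition at each non-smooth fiber. Once this fibration picture is made precise, the reductions to Theorem \ref{t1} fiber-by-fiber and then to \cite[Conj.~9.1]{HW16} on the total space are essentially formal.
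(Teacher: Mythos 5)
Your proposal has the right general shape (fibration method, Theorem \ref{t1} as an input, the Harpaz--Wittenberg conjecture to absorb non-split degenerate fibers), but the fibration you propose to build it on does not exist, and you yourself flag its construction as the unresolved ``main obstacle.'' In Asok's construction the only natural fibration on $N(X,a,n)$ is $\Psi\colon N(X,a,n)\to C_n(X)\subset Sym^n(X)$: the \emph{base} is (an open subset of) the symmetric product, and the \emph{fibers} over rational points are affine norm hypersurfaces $N_{A/k}(z)=a$ for degree-$n$ \'etale algebras $A$. There is no natural dominant map from $N(X,a,n)$ to $\mathbb{A}^1$ or $\mathbb{P}^1$ whose generic fiber is birational to $Sym^m(X)_K$ --- the norm map itself is constant (equal to $a$) on $N(X,a,n)$, and the inductive ``symmetric power of the previous norm variety'' structure you are implicitly invoking belongs to Rost's standard norm varieties attached to symbols, not to the one-step construction $N(X,a,n)$ for a general $X$. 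So the fiber-by-fiber appeal to Theorem \ref{t1} over closed points of $\mathbb{P}^1$, and the direct application of \cite[Conj.~9.1]{HW16} to that fibration, have nothing to attach to. Your splitness argument (``a $k$-point of $X$ supplies a split component in each non-split fiber'') is likewise only asserted, not derived from the geometry.

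The paper's actual route is genuinely different and worth noting. It first replaces $X$ by $X\times\mathbb{A}^1_k$, showing that $N(X,a,n)$ and $N(X\times\mathbb{A}^1_k,a,n)$ are stably birational in a way compatible with unramified Brauer groups (Lemma \ref{l4} plus \cite[Lemma 2.4]{K}); this ``swap'' produces a second fibration $\Pi\colon N(\mathbb{A}^1_k,a,n)\times_{V/S_n}(V\times X^n)/S_n\to N(\mathbb{A}^1_k,a,n)$ whose fibers over a Hilbert set of rational points are Weil restrictions $\Res_{A/k}(X_A)$ --- handled by the hypothesis on $X$ over finite extensions together with \cite[Corollary 3.2]{CL}, not by Theorem \ref{t1}. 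The hypothesis $X(k)\neq\emptyset$ is used to give $\Pi$ a section (so that Harari's fibration argument \cite[Th\'eor\`eme 3]{Hara07} applies), not to split bad fibers. Finally, the Harpaz--Wittenberg conjecture enters only through Lemma \ref{p1}, to prove weak approximation with Brauer--Manin obstruction for smooth projective models of the base $N(\mathbb{A}^1_k,a,n)$ via its fibration over $\mathbb{A}^n_k$ into norm hypersurfaces (\cite[Corollary 9.25]{HW16}). In short: the symmetric products live in the base, the norm hypersurfaces in the fibers, and the conjecture is applied one level down from where you place it. As written, your argument has a genuine gap at its foundational step.
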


In Section 3, we will give some details of the construction of the norm variety $N(X,a,n)$, and one will see that Theorem \ref{t2} can be proved by the same strategy as Theorem \ref{t1}.

\subsection{Notation and terminology}
Let $K$ be a field of characteristic zero and $\bar{K}$ be a fixed algebraic closure of $K$.
A variety $X$ over $K$ is an integral and separated scheme of finite type over $K$, and we write $K(X)$ for its function field.
Following \cite{Sko96} we call a $K$-variety $X$ is split if it contains a geometrically integral open subscheme. For a variety $X$ over $K$ and a field extension $L/K$, we write $X_{L}=X\times_{K}L$, and set
$$\Pic(X)=H_{\text{\'et}}^1(X, \Bbb G_m), \ \ \Br(X)=H_{\text{\'et}}^2(X, \Bbb G_m).$$
The unramified Brauer group $\Br_{nr}(X/K)$ of $X$ is by definition the Brauer group of any smooth projective model of $X$.

We write $X^n$ for the $n$-fold product of $X$ over $\Spec$ $K$. Let $Sym^n(X)$ be the $n$-th symmetric product of $X$.
If $X$ is smooth and geometrically integral, the symmetric product $Sym^n(X)$ is geometrically integral and normal.

By definition (see \cite[Chapter IV]{Kol96}), a geometrically integral $K$-variety $X$ is rationally connected if and only if two general points of $X_{\bar{K}}$ are connected by a rational curve. Rational connectedness is a birational property for projective varieties. A finite product of rationally connected varieties is also rationally connected.

By definition, a geometrically integral $K$-variety $X$ is rational if there is a birational map
$X_{\bar{K}} \dashrightarrow \Bbb{P}^{n}_{\bar{K}}$. We say $X$ is $K$-rational if there is a birational map
$X \dashrightarrow \Bbb{P}^{n}_{K}$ over $K$. A rational variety is rationally connected.

If $X$ is a smooth projective rationally connected variety over $K$, the group $\Br(X_{\bar{K}})$ is finite and the geometric Picard group $\Pic(X_{\bar{K}})$ is a free abelian group of finite type (see \cite[(6) in p.347]{CS}).

In this article, the letter $k$ always denotes a number field and let $\bar{k}$ be a fixed algebraic closure of $k$.
Let $\Omega_k$ be the set of all places of $k$. For each $v \in \Omega_k$, the completion of $k$ at $v$ is denoted by $k_v$. The ring of ad\`eles of $k$ is denoted by ${\mathbf A}_k$. For a smooth projective variety $X$ over $k$, we set
$$X({\mathbf A}_k)^{\Br}=\{(P_v)_{v \in \Omega_k}\in X({\mathbf A}_k): \sum_{v \in  \Omega_k}\inv_v(\alpha(P_v))=0, \ \ \forall \alpha \in \Br(X)\}.$$
The global reciprocity law in class field theory implies that $X(k)\subset X({\mathbf A}_k)^{\Br} \subset X({\mathbf A}_k)$.

The organization of this note is as follows. In Section 2, we will use fibration method to prove Theorem \ref{t1}. In Section 3, we give details of the construction of the norm varieties and then prove Theorem \ref{t2}.

\section{Weak approximation of symmetric products}
In this section, we will use fibration method to prove Theorem \ref{t1}.
\subsection{Preliminary lemmas} We start by stating several preliminary lemmas. Note that only Lemma \ref{l4} is new.
\begin{lemma}\label{l0}
Let $\rho : V \to W$ be a morphism of smooth varieties over $k$. For any smooth projective model $\overline{W}$ of $W$, there is a smooth projective model $\overline{V}$ of $V$, such that the rational map $\overline{V} \dashrightarrow \overline{W}$ defined by $\rho$ is a morphism.
\end{lemma}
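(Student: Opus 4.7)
\medskip

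The plan is to build $\overline{V}$ by resolving the closure of the graph of $\rho$ inside a product of projective compactifications. First I would invoke Nagata's compactification theorem together with Hironaka's resolution of singularities to obtain some smooth projective model $V'$ of $V$, i.e.\ a smooth projective $k$-variety containing $V$ as a dense open subscheme. This is purely an existence step and requires no interaction with $\rho$ or $\overline{W}$.

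Next I would form the closed subscheme $\Gamma \subset V' \times_k \overline{W}$ defined as the scheme-theoretic closure of the graph of $\rho$, where $\rho$ is regarded as a rational map $V' \dashrightarrow \overline{W}$ via the open immersions $V \hookrightarrow V'$ and $W \hookrightarrow \overline{W}$. Since $V' \times_k \overline{W}$ is projective, so is $\Gamma$. The first projection restricts to an isomorphism between $V$ (identified with its graph) and an open subscheme of $\Gamma$, and the second projection $\Gamma \to \overline{W}$ is a morphism that extends $\rho$ by construction. In particular $\Gamma$ is an integral projective $k$-variety birational to $V$ and fitting in a morphism to $\overline{W}$.

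Finally I would apply Hironaka's resolution of singularities to $\Gamma$ to produce a smooth projective variety $\overline{V}$ together with a birational projective morphism $\pi\colon \overline{V} \to \Gamma$ that is an isomorphism over the smooth locus of $\Gamma$. Because $V \subset \Gamma$ sits inside the smooth locus (as $V$ itself is smooth over $k$), $\pi$ is an isomorphism over $V$, which therefore embeds as a dense open subscheme of $\overline{V}$. Composing $\pi$ with the second projection $\Gamma \to \overline{W}$ yields a morphism $\overline{V} \to \overline{W}$ that restricts to $\rho$ on $V$, hence coincides with the rational map $\overline{V} \dashrightarrow \overline{W}$ induced by $\rho$. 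So $\overline{V}$ has all the required properties.

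There is no real obstacle: the argument is a direct combination of graph-closure and resolution of singularities in characteristic zero. The only mild subtlety is verifying that the resolution can be chosen to be an isomorphism over the smooth locus of $\Gamma$, so that $V$ genuinely opens into $\overline{V}$; this is a standard feature of Hironaka's theorem (as strengthened, e.g., in the work of Bierstone--Milman or W\l odarczyk).
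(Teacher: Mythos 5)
The paper states this lemma without proof, and your argument---closing up the graph of $\rho$ in $V'\times_k\overline{W}$ and then resolving singularities by a modification that is an isomorphism over the smooth locus, so that the second projection extends $\rho$---is exactly the standard justification and is correct. One minor point: Nagata's theorem only provides a \emph{proper} compactification, so to guarantee a projective $V'$ you should either invoke Chow's lemma before resolving or simply take the projective closure of a dense quasi-projective open subset of $V$; this changes nothing in the sequel, since only the birational class of the model matters.
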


\begin{lemma}\label{l1}
Let $\alpha : X_1 \to X_2$ be a morphism of smooth projective rationally connected varieties over $k$ satisfying $\overline{\alpha(X_1({\mathbf A}_k)^{\Br})}=X_2({\mathbf A}_k)^\Br$ (this is the case if $\alpha$ has a rational section over $k$ and $\alpha^*(\Br(X_2))=\Br(X_1)$). Assume that $X_1(k)$ is dense in $X_1({\mathbf A}_k)^{\Br}$. Then $X_2(k)$ is dense in $X_2({\mathbf A}_k)^{\Br}$.
\end{lemma}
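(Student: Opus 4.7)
The plan is to treat the main implication as a purely topological consequence of the two hypotheses together with the continuity of $\alpha$, and then verify separately that the bracketed conditions (rational section plus $\alpha^*$ surjective on Brauer groups) actually imply the abstract density hypothesis $\overline{\alpha(X_1({\mathbf A}_k)^{\Br})} = X_2({\mathbf A}_k)^{\Br}$.

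For the main implication, I would fix $(P_v) \in X_2({\mathbf A}_k)^{\Br}$ together with a basic open neighbourhood $U \subset X_2({\mathbf A}_k)$ of it. The density hypothesis on $\alpha(X_1({\mathbf A}_k)^{\Br})$ furnishes some $(Q_v) \in X_1({\mathbf A}_k)^{\Br}$ with $\alpha((Q_v)) \in U$. The map $X_1({\mathbf A}_k) \to X_2({\mathbf A}_k)$ induced by $\alpha$ is continuous coordinatewise in the $v$-adic topologies, so $\alpha^{-1}(U)$ is open and contains $(Q_v)$. The standing density of $X_1(k)$ in $X_1({\mathbf A}_k)^{\Br}$ then supplies an $x \in \alpha^{-1}(U) \cap X_1(k)$, and $\alpha(x) \in U \cap X_2(k)$ is the desired rational approximation; this establishes the density of $X_2(k)$ in $X_2({\mathbf A}_k)^{\Br}$.

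For the parenthetical implication, assume $\alpha$ admits a rational section $\sigma \colon X_2 \dashrightarrow X_1$ defined on a dense open $V \subset X_2$ and that $\alpha^*\colon \Br(X_2) \to \Br(X_1)$ is surjective. The inclusion $\alpha(X_1({\mathbf A}_k)^{\Br}) \subset X_2({\mathbf A}_k)^{\Br}$ is automatic, since for $\gamma \in \Br(X_2)$ one has $\sum_v \inv_v(\gamma(\alpha(Q_v))) = \sum_v \inv_v((\alpha^*\gamma)(Q_v)) = 0$. For the reverse approximation, given $(P_v) \in X_2({\mathbf A}_k)^{\Br}$, smoothness of $X_2$ makes $V(k_v)$ dense in $X_2(k_v)$ at each place, so I can replace $(P_v)$ by $(P_v') \in V({\mathbf A}_k)$ arbitrarily close to it and set $(Q_v) := (\sigma(P_v'))$; by construction $\alpha((Q_v)) = (P_v')$ is near $(P_v)$. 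The substantive point is then $(Q_v) \in X_1({\mathbf A}_k)^{\Br}$: for $\beta \in \Br(X_1)$, writing $\beta = \alpha^*\gamma$ yields $\beta(\sigma(P_v')) = \gamma(P_v')$, so the Brauer sum for $\beta$ at $(Q_v)$ equals the one for $\gamma$ at $(P_v')$. Rational connectedness of $X_2$ forces $\Br(X_2)/\Br(k)$ to be finite, so I fix representatives $\gamma_1, \dots, \gamma_m$ and a finite set $S$ of places outside which the local invariants vanish identically; arranging $(P_v')$ close enough to $(P_v)$ at every $v \in S$ and using local constancy of $\inv_v \circ \gamma_i$ on $X_2(k_v)$ matches all relevant invariants, while global reciprocity absorbs the $\Br(k)$-part.

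The main implication is essentially formal, so the genuine obstacle lies in the parenthetical: one must match the Brauer--Manin condition across every $\beta \in \Br(X_1)$ while simultaneously perturbing the adelic point into the dense open domain of the rational section. The finiteness of $\Br(X_2)/\Br(k)$ provided by rational connectedness is precisely what converts this potentially infinite synchronisation into a perturbation at finitely many places, after which continuity of each local Brauer evaluation closes the argument.
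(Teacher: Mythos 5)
Your proof is correct. The paper states this lemma without proof (it is explicitly flagged as a known preliminary fact, with only Lemma~\ref{l4} claimed as new), so there is no argument to compare against; the route you take --- the purely topological deduction for the main implication, followed by the verification of the parenthetical condition by perturbing the adelic point into the domain of the rational section and invoking the finiteness of $\Br(X_2)/\Br(k)$ together with local constancy of the invariant maps --- is exactly the standard argument the authors implicitly rely on, and you handle the one delicate point (that closeness in $\prod_v X_2(k_v)$ only constrains finitely many places, so the Brauer sums for the representatives $\gamma_1,\dots,\gamma_m$ are preserved) correctly.
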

\begin{lemma}(\cite[Proposition 13.3.11]{CS})\label{l2}
Let $k$ be a number field and let $X$ and $Y$ be birationally equivalent smooth, projective, and geometrically integral varieties over $k$. Assume that $\Br(X)/\Br(k)$ is finite. Then $X(k)$ is dense in $X({\mathbf A}_k)^{\Br}$ if and only if $Y(k)$ is dense in $Y({\mathbf A}_k)^{\Br}$.
\end{lemma}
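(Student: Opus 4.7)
The plan is to reduce to the case where one variety is a blow-up of the other along a smooth center, and then compare the two weak approximation problems directly. Since $X$ and $Y$ are birational smooth projective $k$-varieties, Hironaka's resolution of indeterminacies provides a smooth projective $Z$ over $k$ with birational morphisms to both; hence it suffices to handle the case of a birational morphism $f \colon Y \to X$. The Abramovich--Karu--Matsuki--W{\l}odarczyk weak factorization theorem further lets me assume $f$ is the blow-up of $X$ along a smooth closed subvariety $Z \subsetneq X$. Under this assumption $f$ restricts to an isomorphism $Y \setminus E \xrightarrow{\sim} X \setminus Z$, where $E := f^{-1}(Z)$, and purity of the Brauer group for smooth varieties gives a canonical isomorphism $f^* \colon \Br(X) \xrightarrow{\sim} \Br(Y)$.

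The direction $Y(k)$ dense $\Rightarrow$ $X(k)$ dense is immediate from Lemma \ref{l1}: the inverse rational map $X \dashrightarrow Y$ provides a rational section of $f$, and $f^*\Br(X) = \Br(Y)$.

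For the converse, fix $(P_v) \in Y({\mathbf A}_k)^{\Br}$, a finite set $S \subset \Omega_k$, and open neighborhoods $V_v \subset Y(k_v)$ of $P_v$ for $v \in S$. By the implicit function theorem $(Y \setminus E)(k_v)$ is open and dense in $Y(k_v)$, so after shrinking $V_v$ I may assume $V_v \subset (Y \setminus E)(k_v)$. Since $\Br(Y)/\Br(k)$ is finite and each Brauer class $\alpha$ evaluates locally constantly on $Y(k_v)$, I may pick $P'_v \in V_v$ with $\inv_v(\alpha(P'_v)) = \inv_v(\alpha(P_v))$ for all $\alpha$ in a chosen set of generators and all $v \in S$; then the hybrid adelic point made of $(P'_v)_{v \in S}$ and $(P_v)_{v \notin S}$ still lies in $Y({\mathbf A}_k)^{\Br}$. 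Setting $Q_v := f(P'_v)$ for $v \in S$ and $Q_v := f(P_v)$ for $v \notin S$ gives, by compatibility of the Brauer--Manin pairing with $f^*$, an adelic point $(Q_v) \in X({\mathbf A}_k)^{\Br}$. Applying the hypothesis on $X$ produces $Q \in X(k)$ close to $(Q_v)_{v \in S}$; by openness of $X \setminus Z$ in the $v$-adic topology I may arrange $Q \notin Z(k_v)$ for every $v \in S$, hence $f^{-1}(Q) \in (Y \setminus E)(k)$ is a single rational point whose $v$-adic coordinates approximate $P'_v$, and therefore $P_v$, as closely as desired, by continuity of $f^{-1}$ on the open set $X \setminus Z$.

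The main obstacle is the perturbation step that moves the adelic point off the exceptional divisor without exiting $Y({\mathbf A}_k)^{\Br}$; this is overcome by combining finiteness of $\Br(Y)/\Br(k)$ (reducing control to finitely many Brauer classes) with local constancy of $\inv_v \circ \alpha$ on $Y(k_v)$, both of which are standard but essential. A secondary technical input is the use of weak factorization, which, although a deep theorem, is available in characteristic zero and cleanly reduces the birational comparison to the manageable case of a single blow-up along a smooth center.
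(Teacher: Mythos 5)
Your argument is essentially correct, and it reconstructs the standard proof behind the reference the paper actually relies on: the paper gives no proof of this lemma, citing \cite[Proposition 13.3.11]{CS}, whose argument is exactly your combination of (i) openness of the Brauer--Manin set when $\Br/\Br_0$ is finite, (ii) local constancy of $\inv_v\circ\alpha$ together with $v$-adic density of a dense open in a smooth variety to move the adelic point into a common open subset, and (iii) birational invariance of the Brauer group. Two remarks. First, the reduction via Abramovich--Karu--Matsuki--W{\l}odarczyk is a needlessly heavy hammer: once Hironaka gives you a birational \emph{morphism} $f\colon Y\to X$ of smooth projective varieties, Zariski's main theorem already says $f$ is an isomorphism over a dense open $X\setminus Z$ with $\codim Z\ge 2$, and Grothendieck's theorem gives $f^*\colon\Br(X)\xrightarrow{\sim}\Br(Y)$; this is all your perturbation argument uses, so the blow-up structure buys you nothing. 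Second, your ``immediate'' direction overreaches: Lemma \ref{l1} is stated for rationally connected varieties, which is not a hypothesis here. The cleanest fix is to run your own perturbation argument symmetrically: perturb $(Q_v)\in X(\mathbf A_k)^{\Br}$ into $(X\setminus Z)(k_v)$ at the places of $S$, and at the remaining places lift $Q_v$ to $Y(k_v)$ using that $Y(k_v)\to X(k_v)$ is surjective (valuative criterion applied to a germ through the smooth point $Q_v$ meeting $X\setminus Z$); the Brauer conditions transfer through the isomorphism $f^*$. With that adjustment the proof is complete.
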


\begin{lemma}\label{l4}
Let $X$ and $Y$ be two smooth projective geometrically integral $k$-varieties. Set $U:= X^n \setminus \Delta$ where $\Delta$ is the union of the diagonals, which is the locus corresponding to $n$ distinct points in $X$. Then we have the following Cartesian diagram.
\begin{equation*}
    \xymatrix{
        U\times Y^n\ar[r]^-{p_0}\ar[d]_{q_1} & U\ar[d]^{q_0}\\
        (U\times Y^n)/S_n\ar[r]^-{p_1} & U/S_n,
    }
\end{equation*}
where the action of the symmetric group $S_n$ on $U$ and $Y^n$ is to permute the coordinates and $S_n$ acts diagonally on $U\times Y^n$, the maps $p_0$ and $p_1$ are natural projections, and the maps $q_0$ and $q_1$ are quotient maps. In particular, the map $p_1$ is a smooth morphism.
\end{lemma}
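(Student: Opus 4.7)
The plan is to exploit the fact that $S_n$ acts freely on $U$ (by construction, all $n$ coordinates are distinct), and therefore also freely on the product $U \times Y^n$ via the diagonal action. Once free actions of a finite group on a smooth variety are in hand, the associated quotients are étale Galois covers, and the rest of the lemma will follow from standard descent.

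First I would check that the quotient map $q_0 \colon U \to U/S_n$ is finite étale (Galois with group $S_n$): the action is free on closed points because two $n$-tuples with a nontrivial permutation in common would force a coincidence of coordinates, contradicting the removal of $\Delta$. By the same argument the diagonal action of $S_n$ on $U \times Y^n$ is free, so $q_1$ is also finite étale. Next I would verify that the diagram is Cartesian. The natural map
\[
\varphi \colon U \times Y^n \longrightarrow \bigl( (U \times Y^n)/S_n \bigr) \times_{U/S_n} U
\]
induced by $q_1$ and $p_0$ makes sense because $p_1 \circ q_1 = q_0 \circ p_0$. To see it is an isomorphism one checks it on geometric points: a point of the fiber product is a pair consisting of an $S_n$-orbit $[u', y]$ of $(u',y) \in U \times Y^n$ and a point $u \in U$ such that $q_0(u) = q_0(u')$. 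Since the $S_n$-action on $U$ is free, there is a unique $\sigma \in S_n$ with $\sigma u' = u$, and then $(u, \sigma y)$ is the unique preimage under $\varphi$. Being a map between smooth varieties that is bijective on geometric points and compatible with the étale Galois covers on each side, $\varphi$ is an isomorphism.

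Finally, smoothness of $p_1$ follows by descent. The projection $p_0 \colon U \times Y^n \to U$ is smooth because $Y$ (and hence $Y^n$) is smooth over $k$ and $p_0$ is the base change of the structural morphism $Y^n \to \Spec k$. Since the Cartesian diagram gives $p_0$ as the pullback of $p_1$ along the étale surjective map $q_0$, and smoothness is étale local on the target, we conclude that $p_1$ is smooth.

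The only genuinely delicate step is the verification of the Cartesian property; everything else is essentially formal. I do not expect any serious obstacle, since the freeness of the $S_n$-action on $U$ (guaranteed by the deletion of $\Delta$) makes the quotient formation behave as well as possible.
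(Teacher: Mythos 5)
Your proposal follows essentially the same route as the paper: freeness of the $S_n$-action makes $q_0$ and $q_1$ finite \'etale, one then studies the natural map $\varphi\colon U\times Y^n \to ((U\times Y^n)/S_n)\times_{U/S_n}U$, shows it is an isomorphism, and descends smoothness of $p_0$ to $p_1$ along the faithfully flat map $q_0$. The only point needing repair is your justification of the isomorphism: a morphism of smooth varieties that is bijective on geometric points need not be an isomorphism (e.g.\ an open subset of a blow-up of $\mathbb{A}^2$ meeting the exceptional divisor in a single point maps bijectively to $\mathbb{A}^2$), so you must also invoke finiteness of $\varphi$ --- which holds here because $q_1=\pi\circ\varphi$ is finite and the projection $\pi$ is finite (being a base change of $q_0$), whence $\varphi$ is a finite birational map onto a normal (indeed smooth, since \'etale over $(U\times Y^n)/S_n$) variety and therefore an isomorphism by Zariski's main theorem. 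The paper reaches the same conclusion by a degree count ($\deg q_1=\deg q_0=\deg\pi$ forces $\deg\varphi=1$) followed by the same finite-birational-onto-normal argument; your geometric-point computation using the unique $\sigma\in S_n$ with $\sigma u'=u$ is a perfectly good substitute for the degree count, and is in fact more explicit.
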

\begin{proof}
Since the symmetric group $S_n$ acts freely on $U$ and $U\times Y^n$, the maps $q_0$ and $q_1$ are both finite and \'etale by \cite[Expos\'e V, Corollaire 2.3]{SGAI} (in particular $\deg(q_0)=\deg(q_1)$). Thanks to the commutativity of the above diagram, one has the following commutative diagram
\begin{equation*}
    \xymatrix{
        U\times Y^n\ar[r]^--\beta\ar[d]_{q_1} & (U\times Y^n)/S_n\times_{U/S_n}U\ar[d]^\pi\\
        (U\times Y^n)/S_n\ar@{=}[r] & (U\times Y^n)/S_n,
    }
\end{equation*}
where $\beta$ is induced by $p_0$ and $q_1$, and $\pi$ is the first projection. One can check that $\beta$ is surjective, hence the variety $(U\times Y^n)/S_n\times_{U/S_n}U$ is irreducible. Since the variety $(U\times Y^n)/S_n$ is smooth and the map $\pi$ is \'etale, one concludes that $(U\times Y^n)/S_n\times_{U/S_n}U$ is integral and smooth. Since $\deg(q_1)=\deg(q_0)=\deg(\pi)$, one concludes that $\beta$ is a birational map. Note that $\beta$ is also a finite map. Thus $$U\times Y^n\cong (U\times Y^n)/S_n\times_{U/S_n}U$$ by \cite[Corollaire 4.4.9]{EGAIII}. Since $q_0$ is faithfully flat, one concludes that the map $p_1$ is smooth by \cite[Expos\'e II, Corollaire 4.13]{SGAI}.
\end{proof}

\subsection{Proof of Theorem \ref{t1}}
Consider the following natural morphisms
\begin{equation}\label{map}
  X^n/S_n \overset{g}\longleftarrow (X^n\times \Bbb{A}_k^n)/S_n \overset{f}\longrightarrow \Bbb{A}_k^n/S_n,
\end{equation}
where the action of $S_n$ on $\Bbb{A}_k^n$ is just permuting the coordinates, and $S_n$ acts diagonally on $X^n\times \Bbb{A}_k^n$. In particular, the quotient $\Bbb{A}_k^n/S_n$ is isomorphic to $\Bbb{A}_k^n$, as the ring $k[x_1, \dots, x_n]^{S_n}$ of symmetric polynomials coincides with the polynomial ring in the elementary symmetric polynomials.

We denote by $E_{ij}$ the closed subset $V(x_i-x_j)\subset \Bbb{A}_k^n$ for $i,j \in \{1,\dots, n\}$ and set $\Delta=\bigcup_{i\ne j}E_{ij}$. Set $U:=(\Bbb{A}_k^n \setminus \Delta)/S_n$, which is an open subset of $\Bbb{A}_k^n/S_n$. The projection $$\pi: \Bbb{A}_k^n \setminus \Delta \to U$$ is a Galois \'etale covering (cf. \cite[Section 1.2]{Wit23}) and then we define a Hilbert subset $H$ of $\Bbb{A}_k^n/S_n$ to be the set of points of $U$ above which the fiber of $\Bbb{A}_k^n \setminus \Delta$ is irreducible. By \cite[Corollary 1.3.2]{Wit23}, we have $H\cap U(k)\ne \emptyset$. For any $y\in H\cap U(k)$, the fiber $\pi^{-1}(y)$ is an $S_n$-torsor over $k$, and its affine ring is a field $$K=k[x_1,\dots, x_n]/I=k[\overline{x}_1,\dots, \overline{x}_n],$$ where $I$ is a maximal ideal of $k[x_1,\dots, x_n]$, and $\overline{x}_i$ is the image of $x_i$ in $k[x_1,\dots, x_n]/I$ for $i \in \{1, \dots, n\}$. In particular $K$ is a Galois extension of $k$ with Galois group $S_n$, and $\{\overline{x}_i\}_{1\le i \le n}$ are all roots of an irreducible polynomial of degree $n$ over $k$.

Let $S_{n-1}\subset S_n$ be the subgroup of permutations that fix the first factor. The Galois closure of $K^{S_{n-1}}(=k(\overline{x}_1))$ over $k$ is $K$.
This implies that the fiber
\begin{equation}\label{equres}
f^{-1}(y)(=(X^{n}\times \pi^{-1}(y))/S_n)\cong \Res_{k(\overline{x}_1)/k}(X_{k(\overline{x}_1)}).
\end{equation}

Since the groups $\Br(\Res_{k(\overline{x}_1)/k}(X_{k(\overline{x}_1)}))/\Br(k)$ and $\Br(X_{k(\overline{x}_1)})/\Br(k(\overline{x}_1))$ are both finite (cf. \cite[(6) in p.347 ]{CS}), the sets $\Res_{k(\overline{x}_1)/k}(X_{k(\overline{x}_1)})(\mathbf A_{k})^{\Br}$ and $X_{k(\overline{x}_1)}(\mathbf A_{k(\overline{x}_1)})^{\Br}$ are open in $\Res_{k(\overline{x}_1)/k}(X_{k(\overline{x}_1)})(\mathbf A_{k})$ and $X_{k(\overline{x}_1)}(\mathbf A_{k(\overline{x}_1)})$, respectively, by \cite[Proposition 13.3.1 (iv)]{CS}. Then we can see that the fiber $f^{-1}(y)$ satisfies weak approximation with Brauer--Manin obstruction by \cite[Corollary 3.2]{CL}, \cite[Proposition 5.14]{Brian} and our assumption on $X$. Note that by Lemma \ref{l4}, the generic fiber of $f$ is rationally connected, since $X^n$ is rationally connected. Thus we obtain that any smooth projective model of $(X^n\times \Bbb{A}_k^n)/S_n$ satisfies weak approximation with Brauer--Manin obstruction by Lemma \ref{l3}, \cite[Theorem 3.6]{HW22} and Lemma \ref{l2}. Since the generic fiber of $g$ is $k(X^n/S_n)$-rational (see \cite[Lemma 2.4]{K}), we have
$$\Br_{nr}(k(X^n/S_n)/k)=\Br_{nr}(k((X^n\times \Bbb{A}_k^n)/S_n)/k)$$ by \cite[Proposition 6.2.9]{CS}, and in particular $g$ has rational sections over $k$. Thus any smooth projective model of $X^n/S_n$ satisfies weak approximation with Brauer--Manin obstruction by Lemma \ref{l0}, \ref{l1} and \ref{l2}.
\qed

\begin{lemma}[suggested by Olivier Wittenberg]\label{l3}

Taking a smooth projective model $Z$ of $(X^n\times \Bbb{A}_k^n)/S_n$ such that the rational map $Z \dashrightarrow {\Bbb{P}_k^n}$ defined by ${f}$ is a morphism and we denote by $\widehat{f}$ this morphism. Then the fiber of $\widehat{f}$ above
any codimension 1 point of ${\Bbb{P}_k^n}$ is split.

\end{lemma}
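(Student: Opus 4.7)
The plan is to choose the smooth projective model $Z$ explicitly and read the fibers off of that. Using the natural identification $(X^n\times\mathbb{A}^n_k)/S_n \cong Sym^n(X\times\mathbb{A}^1_k)$, a natural compactification is $Z_0 := Sym^n(X\times\mathbb{P}^1_k)$, and it admits a morphism $Z_0 \to Sym^n(\mathbb{P}^1_k) \cong \mathbb{P}^n_k$ extending $f$. I would let $Z$ be any smooth projective resolution of $Z_0$, so that $\widehat{f}: Z \to Z_0 \to \mathbb{P}^n_k$ is a morphism.

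Every codimension-one point $\xi$ of $\mathbb{P}^n_k$ is either the generic point $\eta$ of the discriminant divisor $D$ (the image of the big diagonal $\Delta' \subset (\mathbb{P}^1_k)^n$), or it lies in the open complement $V = \mathbb{P}^n_k \setminus D = ((\mathbb{P}^1_k)^n \setminus \Delta')/S_n$. For $\xi \in V$, the plan is to apply Lemma \ref{l4} with the roles of $X$ and $Y$ there played by $\mathbb{P}^1_k$ and our $X$: the map $(U \times X^n)/S_n \to U/S_n = V$ is smooth, where $U := (\mathbb{P}^1_k)^n \setminus \Delta'$, and its fibers are Weil restrictions of products of $X$, hence geometrically integral. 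Since the source is already smooth over $V$, no resolution is needed there and $\widehat{f}^{-1}(\xi)$ is geometrically integral, hence split.

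The substantive case is $\xi = \eta$. The singular locus of $Z_0$ consists of multisets in $(X\times\mathbb{P}^1_k)^n/S_n$ containing at least one repeated entry; its generic stratum $S$ parametrizes multisets with exactly one doubled pair $(P,a)$, has stabilizer $\mathbb{Z}/2$, surjects onto $D$, and has codimension $\dim X + 1$ in $Z_0$. Transverse to $S$, the local model of $Z_0$ is $\mathbb{A}^{\dim X + 1}_k / \{\pm 1\}$, i.e., the affine cone on the $2$-uple Veronese embedding of $\mathbb{P}^{\dim X}_k$. Blowing up $Z_0$ along $S$ resolves this stratum, producing transversally the total space of $\mathcal{O}_{\mathbb{P}^{\dim X}_k}(-2)$, with exceptional divisor $E$ a $\mathbb{P}^{\dim X}$-bundle over $S$. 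The remaining (deeper) singular strata of $Z_0$ have codimension $\ge \dim X + 2$ in $Z_0$ and their images in $\mathbb{P}^n_k$ have codimension $\ge 2$, so the further blow-ups required to make $Z$ globally smooth do not affect $\widehat{f}^{-1}(\eta)$.

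The crucial computation is that $E$ appears in $\widehat{f}^*(D)$ with multiplicity exactly one. A local equation of $D$ near $\eta$ pulls back on the source to the invariant function $t^2$ on the transverse $\mathbb{A}^{\dim X + 1}_k$, where $t$ is the sign-flipping coordinate proportional to $a_1 - a_2$. In a blow-up chart on the total space of $\mathcal{O}(-2)$ with coordinates $(\xi_2, \ldots, \xi_{\dim X + 1}, w)$, where $w$ is a local equation of $E$, one finds $t^2 = w\,\xi_{\dim X + 1}^2$, so $\widehat{f}^*(D) = E + 2\,(\text{strict transform})$ locally. Moreover $E_\eta$, the generic fiber of $E \to D$, is a $\mathbb{P}^{\dim X}$-bundle over $S_\eta$, and $S_\eta$ is geometrically integral by a Weil-restriction argument (Lemma \ref{l4} applied to the parametrization of the $n-2$ non-doubled entries). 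Hence $E_\eta$ is a multiplicity-one geometrically integral component of $\widehat{f}^{-1}(\eta)$, which is therefore split. The main obstacle is the transverse-local multiplicity computation; I would pattern it on the explicit case $n = 2$, $X = \mathbb{A}^1_k$, where the source is cut out by $u'^2 = 4 s_2' t_2'$ and $D$ by $t_2'$, then extend uniformly over $S$.
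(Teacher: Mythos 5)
Your argument is correct in substance but follows a genuinely different route from the paper's. You fix one explicit model, a resolution of $Sym^n(X\times \mathbb{P}^1_k)$ mapping to $Sym^n(\mathbb{P}^1_k)\cong\mathbb{P}^n_k$, and verify splitness by hand: over the complement of the (irreducible) discriminant divisor $D$ the fibres are Weil restrictions of $X$ along \'etale algebras, hence geometrically integral, and over the generic point of $D$ you exhibit the exceptional divisor of the blow-up along the generic diagonal stratum as a geometrically integral component of multiplicity one. Your key computations check out: the transverse local model $\mathbb{A}^{\dim X+1}_k/\{\pm1\}$ is the correct Luna-slice description, the valuation of the pullback $t^2$ of a local equation of $D$ along the exceptional divisor is $2/2=1$, the dimension counts make $E_\eta$ a full-dimensional component of the fibre, and the deeper singular strata map into codimension $\geq 2$ of $\mathbb{P}^n_k$ so they cannot interfere. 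The paper argues instead abstractly and uniformly for every codimension-one point $P$ and every model at once: since $X(k(X))\neq\emptyset$, the fibration $f$ has a section over $k(X)$; as $k$ is algebraically closed in $k(X)$, the unique codimension-one point $P'$ of $\mathbb{P}^n_{k(X)}$ above $P$ yields an extension of discrete valuation rings $R_P\subset R_{P'}$ with ramification index one and primary residue field extension, so the generic fibre of $Z\times_{\mathbb{P}^n_k}\Spec(R_P)$ has a point over $\mathrm{Frac}(R_{P'})$ and \cite[Lemma 1.1 (b)]{Sko96} gives splitness of the special fibre. What your approach buys is explicit information about the degenerate fibre (its components and multiplicities); what it costs is length, a case analysis, and several details you would still have to nail down: the resolution must be chosen so that over the generic point of $D$ it is exactly the blow-up along the closure of $S$ (an arbitrary resolution need not have this shape), the \'etale-local product structure of $Sym^n$ along $S$ must be justified, and you only prove the statement for your particular $Z$ whereas the lemma is invoked for an arbitrary smooth projective model on which $f$ becomes a morphism --- this last point is harmless, since splitness of the fibre over a codimension-one point depends only on the generic fibre of the fibration and hence not on the model, again by Skorobogatov's lemma, but it deserves a remark.
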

\begin{proof}
For any codimension 1 point $P$ of ${\Bbb{P}_k^n}$, we write $R_P$ for its local ring, which is a discrete valuation ring. Consider the following morphism
$$j: Z\times_{{\Bbb{P}_k^n}} \Spec(R_P)\to \Spec(R_P),$$
where $j$ is the second projection and $j$ is a proper surjective map. In particular $j$ is also flat by \cite[Lemma 10.3.A]{Hartshorne}.
Take the codimension 1 point $P'$ of ${\Bbb{P}_{k(X)}^n}: ={\Bbb{P}_{k}^n}\times_k k(X)$ above $P$. Note that $k$ is algebraically closed in $k(X)$. Such point exists and is unique by \cite[Chap II, Exercise 3.20 (f)]{Hartshorne}, \cite[Proposition 4.3.2]{EGAIV2}.
Write $R_{P'}$ for the local ring of $P'$ in ${\Bbb{P}_{k(X)}^n}$ and $K'$ the function field of $R_{P'}$. Since $X(k(X))\ne \emptyset$, the map $f$ (see \ref{map}) has a section over $k(X)$. Thus the generic fiber of $j$ has a $K'$-point. By \cite[Proposition 4.3.2, Corollaire 4.3.6]{EGAIV2}, one can see that the local extension of discrete valuation ring $R_P\subset R_{P'}$ satisfies the conditions of \cite[Lemma 1.1 (b)]{Sko96}. Then by this lemma the special fiber of $j$ is split.
\end{proof}

\begin{remark}
Given a smooth projective variety $Y$ over $k$, one has the following complex (see \cite[\S 15.1]{CS}, \cite[\S 1.1]{Wit12} for a detailed definition)
\begin{equation}\label{complex}
\varprojlim_{n}\CH_{0}(Y)/n\to\prod_{v \in \Omega_{k}}\varprojlim_{n}\CH'_{0}(Y_{k_v})/n\to \Hom(\Br(X),\Q/\Z),
\end{equation}
where $\CH_0(Y)$ is the Chow group of $0$-cycles, the modified local Chow group $\CH'_0(Y_{k_v})$ is defined to be the usual Chow group $\CH_0(Y_{k_v})$ if $v$ is a non-archimedean place and otherwise $\frac{\CH_0(Y_{k_v})}{N_{\kbar{_v}/k_v}(\CH_0(Y_{\kbar_v}))}.$

By combining Theorem \ref{t1} and a result of Liang \cite[Theorem 3.2.1]{Li}, we can obtain that the complex \ref{complex} is exact for any smooth projective model of $Sym^n(X)$.
\end{remark}

\section{Weak approximation of norm varieties}
In this section, we first outline the construction of the norm varieties (cf. \cite[\S 2]{A}) and then prove Theorem \ref{t2}.

\subsection{Constructing norm varieties} Let $X$ be a smooth quasi-projective geometrically integral $k$-variety. Write $X^n$ for the $n$-fold product. The symmetric group $S_n$ acts on $X^n$ by permuting the factors. In particular, the symmetric group $S_n$ acts freely on the open subscheme $U:= X^n \setminus \Delta$ where $\Delta$ is the union of the diagonals, which is the locus corresponding to $n$ distinct points in $X$. A geometric quotient $C_n{(X)}:=U/S_n$ exists as a smooth open subscheme of $Sym^n(X)$.

There is a natural morphism $p: X \times Sym^{n-1}(X) \to Sym^n(X)$; this morphism is finite surjective of degree $n$.
The preimage of $C_n(X)$ under $p$ gives rise to a morphism
$$X \times Sym^{n-1}(X) \supset p^{-1}(C_n{(X)}) \longrightarrow C_n(X),$$
which is a finite \'etale morphism of degree $n$. In particular, $p^{-1}(C_n{(X)})$ is a smooth scheme.

Set \begin{equation}\label{equ1}
    \mathcal{A}_X:= p_{\ast}(\mathscr{O}_{ X \times Sym^{n-1}(X) })|_{C_n(X)},
\end{equation}
which is a locally free sheaf of $\mathscr{O}_{C_n(X)}$-algebras of rank $n$.
We let
\begin{equation}\label{equ2}
\psi: \mathbb{V}(\mathcal{A}_X) \to C_n(X)
\end{equation}
be the associated geometric vector bundle (given by taking the spectrum of the symmetric algebra of the dual of $\mathcal{A}_X$). Since $\mathcal{A}_X$ is a locally free algebra, there is a well-defined norm morphism
\begin{equation}\label{equ3}
N:\mathcal{A}_X \to \mathscr{O}_{C_n(X)},
\end{equation}
which can be identified as a section of $Sym_n(\mathcal{A}_{X}^{\vee})$.
Take an element $a \in k^\times$. Let $N(X,a,n)\subset \mathbb{V}(\mathcal{A}_X)$ be the closed
subscheme defined by the equation $N-a = 0$. Denoted by
\begin{equation}\label{equ4}
\Psi: N(X,a,n) \to C_n(X)
\end{equation}
the composition of the inclusion $N(X,a,n)\subset \mathbb{V}(\mathcal{A}_X)$ with $\psi$. By \cite[Lemma 2.1]{Sus}, the variety $N(X,a,n)$ is smooth over $C_n{(X)}$ (smooth over $k$) and geometrically irreducible. In particular, by \cite[Proposition 2.6]{A} the variety $N(X,a,n)$ is rationally connected if $X$ is rationally connected. One calls $N(X,a,n)$ a norm variety (or Rost variety).

\begin{lemma}\label{p1}
Assuming the conjecture of Harpaz--Wittenberg (\cite[Conj.9.1]{HW16}). Then any smooth projective model of $N(\Bbb{A}_{k}^1,a,n)$ satisfies weak approximation with Brauer--Manin obstruction.
\end{lemma}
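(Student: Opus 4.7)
The plan is to imitate the fibration argument from the proof of Theorem \ref{t1}, applying \cite[Conj.9.1]{HW16} to the natural structural morphism
\[
\Psi: N(\Bbb{A}_k^1, a, n) \to C_n(\Bbb{A}_k^1) \subset Sym^n(\Bbb{A}_k^1) \cong \Bbb{A}_k^n.
\]
First I would invoke Lemma \ref{l0} to pass to a smooth projective model $Z$ of $N(\Bbb{A}_k^1, a, n)$ for which the rational map $Z \dashrightarrow \Bbb{P}_k^n$ extending $\Psi$ becomes an actual morphism $\widehat{\Psi}: Z \to \Bbb{P}_k^n$.

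The Hilbert irreducibility step from the proof of Theorem \ref{t1} carries over verbatim and produces a Hilbert subset $H \subset C_n(\Bbb{A}_k^1)$ such that for every $y \in H \cap C_n(\Bbb{A}_k^1)(k)$ the polynomial $P_y(t)$ is irreducible over $k$, so that $K_y := k[t]/P_y(t)$ is a field extension of degree $n$. Under this identification, the fiber $\Psi^{-1}(y)$ is the smooth affine variety $\{z \in K_y : N_{K_y/k}(z) = a\}$, a principal homogeneous space under the norm-one torus $R^{(1)}_{K_y/k}(\Gm)$. Smooth projective compactifications of such torsors are known to satisfy weak approximation with Brauer--Manin obstruction by classical results on tori over number fields (Sansuc and Colliot-Th\'el\`ene--Sansuc), supplying the fiberwise input demanded by the conjecture.

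To invoke \cite[Conj.9.1]{HW16} for $\widehat{\Psi}$, two geometric conditions remain to verify: the geometric generic fiber of $\widehat{\Psi}$ is rationally connected, and the fibers above codimension 1 points of $\Bbb{P}_k^n$ are split. The first follows from \cite[Proposition 2.6]{A} applied to the rational variety $\Bbb{A}_k^1$. For the second, divisors lying inside $C_n(\Bbb{A}_k^1)$ are harmless, since $\Psi$ is smooth with geometrically irreducible fibers there. The main obstacle will be splitness over the codimension 1 components of $\Bbb{P}_k^n \setminus C_n(\Bbb{A}_k^1)$ -- the discriminant divisor and the hyperplane at infinity -- where $\Psi$ genuinely degenerates. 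I would address this in the spirit of Lemma \ref{l3}, exhibiting rational sections of $\Psi$ after suitable function-field base changes, appealing to \cite[Lemma 1.1]{Sko96}, and if necessary further refining the model $Z$ so that every boundary fiber contains a geometrically integral component. Once codimension 1 splitness is secured, \cite[Conj.9.1]{HW16} yields weak approximation with Brauer--Manin obstruction for $Z$, and Lemma \ref{l2} transfers the conclusion to any other smooth projective model of $N(\Bbb{A}_k^1, a, n)$.
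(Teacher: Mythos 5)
Your argument tracks the paper's proof up to and including the fiberwise input: both use the structural morphism $\Psi: N(\Bbb{A}_k^1,a,n)\to C_n(\Bbb{A}_k^1)\subset \Bbb{A}_k^n$, and your identification of the fibers over (Hilbertian) rational points as torsors $\{N_{K_y/k}(z)=a\}$ under the norm-one torus, together with the classical results on compactifications of such torsors, is an acceptable substitute for the paper's citation of \cite[Theorem 6.3.1]{Sko} (which in fact covers the fibers over \emph{all} rational points of $C_n(\Bbb{A}_k^1)$, since the norm equation for an arbitrary \'etale algebra is handled there; restricting to a Hilbert subset is also fine, as that is all the fibration theorem requires).

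The genuine gap is in your last paragraph. You treat splitness of the fibers of $\widehat{\Psi}$ over the codimension~$1$ points of $\Bbb{P}_k^n$ as a hypothesis that must be verified, call it ``the main obstacle,'' and then only gesture at resolving it (``in the spirit of Lemma~\ref{l3},'' ``if necessary further refining the model''). Two problems. First, the strategy of Lemma~\ref{l3} does not transfer: that argument produces sections of $f$ over $k(X)$ from the tautological point $X(k(X))\neq\emptyset$, whereas $\Psi$ has no comparable section over any useful function-field extension --- its generic fiber is the norm hypersurface of the generic degree~$n$ \'etale algebra, which has no evident rational point --- so you have no mechanism for proving splitness over the discriminant divisor or the hyperplane at infinity, and there is no reason to expect it to hold. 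Second, and decisively, the verification is unnecessary: the statement being invoked is \cite[Corollary 9.25]{HW16}, which, conditionally on Conjecture~9.1, requires only that the geometric generic fiber of $\widehat{\Psi}$ be rationally connected and that the fibers over the rational points of a Hilbert subset satisfy weak approximation with Brauer--Manin obstruction; it imposes \emph{no} condition on the degenerate fibers. That is precisely why the lemma is stated conditionally on the conjecture rather than deduced from an unconditional fibration theorem such as \cite[Theorem 3.6]{HW22} (which is the one that needs split fibers in codimension~$1$, hence Lemma~\ref{l3} in the proof of Theorem~\ref{t1}). Dropping your final paragraph and concluding directly via Lemma~\ref{l0}, \cite[Corollary 9.25]{HW16} and Lemma~\ref{l2} recovers the paper's proof.
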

\begin{proof}
By \ref{equ4}, we have the following morphism
$$\varphi: N(\Bbb{A}_{k}^1,a,n) \to C_n{(\Bbb{A}_{k}^1)}\subset \Bbb{A}_{k}^n.$$
For any rational point $c$ of $C_n{(\Bbb{A}_{k}^1)}$, the fiber $\varphi^{-1}(c)$ is isomorphic to the norm hypersurface $$\{N_{A/k}(z)-a=0\} \subset \Bbb{A}_{k}^{n},$$ where $A$ is a finite \'etale $k$-algebra with $[A:k]=n$ and $z=(z_1,\dots, z_n)$.
By \cite[Theorem 6.3.1]{Sko},  any smooth projective model of $\varphi^{-1}(c)$ satisfies weak approximation with Brauer--Manin obstruction. Note that the generic fiber of $\varphi$ is rationally connected. Now this lemma follows from Lemma \ref{l0}, \ref{l2} and \cite[Corollary 9.25]{HW16}.
\end{proof}

\subsection{Proof of Theorem \ref{t2}}
Let $$p'_0: X \times Sym^{n-1}(X) \to Sym^n(X); \ \ \ p'_1:  X \times \Bbb{A}_{k}^1 \times Sym^{n-1}(X \times \Bbb{A}_{k}^1) \to Sym^n(X \times \Bbb{A}_{k}^1)$$
$$  r'_0: X \times \Bbb{A}_{k}^1 \times Sym^{n-1}(X \times \Bbb{A}_{k}^1)\to X \times Sym^{n-1}(X); \ \ \
 r'_1:  Sym^{n}(X \times \Bbb{A}_{k}^1) \to Sym^n(X)$$ be the natural morphisms.
Take the open subscheme $U$ of $X^n$ which is the locus corresponding to $n$ distinct points in $X$. Then we have the following commutative diagram

\begin{equation*}
    \xymatrix{
        p_1^{-1}((U\times (\Bbb{A}_{k}^1)^n)/S_n)\ar[r]^-{r_0}\ar[d]_{p_1} & p_0^{-1}(U/S_n)\ar[d]^{p_0}\\
        (U\times (\Bbb{A}_{k}^1)^n)/S_n\ar[r]^--{r_1} & U/S_n,
    }
\end{equation*}
where $p_i$ (resp. $r_i$) is the restriction of $p'_i$ (resp. $r_i$) to the corresponding open subset for $i \in \{0, 1\}$.
Arguing in the same way as in the proof of Lemma \ref{l4}, one can show that the diagram is Cartesian. By Lemma \ref{l4}, the map $r_1$ is smooth. Now we claim that the variety
\begin{equation}\label{eq7}
N(X,a,n)\times_{U/S_n}(U\times (\Bbb{A}_{k}^1)^n)/S_n
\end{equation}
is an open subscheme of $N(X\times \Bbb{A}_{k}^1,a,n)$. Indeed, by \cite[Chapter III, Proposition 9.3]{Hartshorne}, we have
\begin{equation}\label{equ5}
r^{*}_1(\mathcal{A}_X)= \mathcal{A}_{{X\times \Bbb{A}_{k}^1}}|_{(U\times (\Bbb{A}_{k}^1)^n)/S_n}
\end{equation}
(see \ref{equ1}).
Since $\mathcal{A}_X$ is a locally free sheaf, we have
$r^{*}_1({\mathcal{A}_X^{\vee}})=\mathcal{A}_{{X\times \Bbb{A}_{k}^1}}^{\vee}|_{((U\times (\Bbb{A}_{k}^1)^n)/S_n)}$.
In particular we have
\begin{equation}\label{eq6}
 r^{*}_1Sym({\mathcal{A}_X^{\vee}})=Sym(r^{*}_1({\mathcal{A}_X^{\vee}}))=Sym(\mathcal{A}_{{X\times \Bbb{A}_{k}^1}}^{\vee}|_{(U\times (\Bbb{A}_{k}^1)^n)/S_n})
\end{equation}
by \cite[\S 1.7 (1.7.5)]{EGAII}. Thus we have $$\mathbb{V}(\mathcal{A}_X)\times_{U/S_n}(U\times (\Bbb{A}_{k}^1)^n)/S_n= \mathbb{V}(\mathcal{A}_{{X\times \Bbb{A}_{k}^1}}^{\vee}|_{(U\times (\Bbb{A}_{k}^1)^n)/S_n})\subset \mathbb{V}(\mathcal{A}_{{X\times \Bbb{A}_{k}^1}}^{\vee})$$
by \cite[Proposition 1.7.11]{EGAII}.
By the construction of norm varieties, we conclude that the variety $$N(X,a,n)\times_{U/S_n}(U\times (\Bbb{A}_{k}^1)^n)/S_n$$
is an open subscheme of $N(X\times \Bbb{A}_{k}^1,a,n)$.

Let
$$\chi : N(X,a,n)\times_{U/S_n}(U\times (\Bbb{A}_{k}^1)^n)/S_n\to N(X,a,n)$$
be the first projection. By \cite[Lemma 2.4]{K}, the varieties $(U\times (\Bbb{A}_{k}^1)^n)/S_n$ and ${U/S_n}\times \Bbb{A}_{k}^{n}$ are birationally equivalent. This implies that the map $\chi$ has rational sections over $k$ and $$\Br_{nr}(k(N(X,a,n))/k)=\Br_{nr}(k(N(X\times \Bbb{A}_{k}^1,a,n))/k)$$ by \cite[Proposition 6.2.9]{CS}. Thus, to prove our theorem, it's enough to show that any smooth projective model of $N(X\times \Bbb{A}_{k}^1,a,n))$ satisfies weak approximation with Brauer--Manin obstruction by Lemma \ref{l0}, \ref{l1} and \ref{l2}.

Now we take the open subscheme $V$ of $(\Bbb{A}_{k}^1)^n$ which is the locus corresponding to $n$ distinct points in $\Bbb{A}_{k}^1$. By a similar argument as above, we have the following morphism
$$ \Pi : N(\Bbb{A}_{k}^1,a,n)\times_{V/S_n}(V\times X^n)/S_n\to N(\Bbb{A}_{k}^1,a,n),$$
where $N(\Bbb{A}_{k}^1,a,n)\times_{V/S_n}(V\times X^n)/S_n$ is an open subscheme of $N(X\times \Bbb{A}_{k}^1,a,n)$. Note that the natural smooth (see Lemma \ref{l4}) morphism $(V\times X^n)/S_n \to {V/S_n}$ has sections over $k$, as $X(k)\ne \emptyset$. Thus the morphism $\Pi$ has sections over $k$, and the generic fiber of $\Pi$ has a smooth $k(N(\Bbb{A}_{k}^1,a,n))$-point. By Lemma \ref{l4}, we know that the morphism $(V\times X^n)/S_n \to {V/S_n}$ has rationally connected generic fiber. Thus the generic fiber of $\Pi$ is also rationally connected.

Recall that $V \to V/S_n$ is a Galois \'etale covering. We define a Hilbert subset $H$ of $N(\Bbb{A}_{k}^1,a,n)$ to be the set of points of $N(\Bbb{A}_{k}^1,a,n)$ above which the fiber of $\pi$ is irreducible, where $$\pi: V \times_{V/S_n}{N(\Bbb{A}_{k}^1,a,n)} \to N(\Bbb{A}_{k}^1,a,n)$$ is the second projection. Then for any $y\in H\cap N(\Bbb{A}_{k}^1,a,n)(k)$, the fiber $\Pi^{-1}(y)$ is a Weil restriction of $X$ from an \'etale algebra (see \ref{equres}). In particular, the fiber $\Pi^{-1}(y)$ satisfies weak approximation with Brauer--Manin obstruction (see proof of Theorem \ref{t1}). Now we can see that any smooth projective model of $N(X\times \Bbb{A}_{k}^1,a,n)$ satisfies weak approximation with Brauer--Manin obstruction by Lemma \ref{p1} and the proof of \cite[Th\'eor\`eme 3]{Hara07}. The proof is complete.
\qed

\medskip

\bf{Acknowledgments}	
\it{ The authors would like to thank Yang Cao for suggesting to us the topic of this article and generously sharing his ideas with us. Without his help it wouldn't be possible to see this article coming out. Special thanks to Olivier Wittenberg for his valuable comments, especially for his suggestion on Lemma 2.5.
We would also like to thank Fei Xu, Yongqi Liang and Guang Hu for their useful suggestions.
The first author is partially supported by National Natural Science Foundation of China No.12071448.}


\end{document}